\newcommand{\R}{{\mathbb R}}
\newcommand{\set}[1]{\left\{#1\right\}}
\newcommand{\dr}{\partial}
\DeclareMathOperator{\divg}{div}
\newcommand{\om}{\Omega}
\def\XXint#1#2#3{{\setbox0=\hbox{$#1{#2#3}{\int}$ }
\vcenter{\hbox{$#2#3$ }}\kern-.6\wd0}}
\def\lbb{\mbox{$\,$---}}
\def\Yint#1{\mathchoice
{\XXint\displaystyle\textstyle{#1}}%
{\XXint\textstyle\scriptstyle{#1}}%
{\XXint\scriptstyle\scriptscriptstyle{#1}}%
{\XXint\scriptscriptstyle\scriptscriptstyle{#1}}%
\!\iint}
\def\fiint{\Yint\lbb}
\DeclareMathOperator *{\diam}{diam}
\DeclareMathOperator *{\dist}{dist}
\newtheorem{theorem}{Theorem}[section]
\newtheorem{corollary}[theorem]{Corollary}
\newtheorem{proposition}[theorem]{Proposition}
\newtheorem{definition}{Definition}[section]
\renewcommand{\L}{{L}} 
\newcommand{\Lloc}{\L_{\operatorname{loc}}} 
\newcommand{\W}{{W}}
\newcommand{\Hdot}{\dot{H}\protect{\vphantom{H}}} 
\newcommand{\HT}{H_t} 
\newcommand{\IC}{\mathbb{C}}
\newcommand{\loc}{\operatorname{loc}}
\newcommand{\E}{\mathsf{E}} 
\newcommand{\C}{{C}} 
\newcommand{\dhalf}{D_t^{1/2}} 
\newcommand{\mS}{{\mathcal S}} 
\newcommand{\pd}{\partial}
\renewcommand{\d}{\, \mathrm{d}} 
\numberwithin{equation}{section}
\theoremstyle{plain}
\begin{document}

\title[On the half-time derivative for parabolic PDEs]{On the regularity problem for parabolic operators and the role of half-time derivative}

\author{Martin Dindo\v{s}}
\address{School of Mathematics,
         The University of Edinburgh and Maxwell Institute of Mathematical Sciences, UK}
\email{M.Dindos@ed.ac.uk}
\keywords{parabolic PDEs, boundary value problems, half-derivative, non-tangential maximal function}
\subjclass[2020]{35K10, 35K20, 35K40,  35K51}

\maketitle

{\centering\footnotesize \it This paper is dedicated to my great friend and collaborator Jill Pipher.\par}

\begin{abstract} In this paper we present the following result on regularity of solutions of the second order parabolic equation $\dr_t u - \divg (A \nabla u)+B\cdot \nabla u=0$ on cylindrical domains of the form 
$\Omega=\mathcal O\times\mathbb R$ where $\mathcal O\subset\mathbb R^n$ is a uniform domain
(it satisfies both interior corkscrew and Harnack chain conditions) and has a boundary that is  $n-1$-Ahlfors regular. Let $u$ be a solution of such PDE
in $\Omega$ and the non-tangential maximal function of its gradient in spatial directions $\tilde{N}(\nabla u)$ belongs to $L^p(\partial\Omega)$ for some $p>1$. Furthermore, assume that for $u|_{\partial\Omega}=f$
we have that $D^{1/2}_tf\in L^p(\partial\Omega)$. Then both $\tilde{N}(D^{1/2}_t u)$ and $\tilde{N}(D^{1/2}_tH_t u)$ also belong to $L^p(\partial\Omega)$, where $D^{1/2}_t$ and $H_t$ are the half-derivative and the Hilbert transform in the time variable, respectively. We expect this result will spur new developments in the study of solvability of the $L^p$ parabolic Regularity problem as thanks to it it is now possible to formulate the parabolic Regularity problem on a large class of time-varying domains.
\end{abstract}

\section{Introduction} The study of the $L^p$ boundary value problems for elliptic and parabolic PDEs has a long and interesting history. Let $\Omega\subset{\mathbb R^n}\times\mathbb R$ be a space-time domain and consider the parabolic differential equation on $\Omega$ of the form
\begin{equation}\label{E:pde}
	\begin{cases}
		\dr_t u - \divg (A \nabla u)+B\cdot \nabla u=0  & \text{in } \Omega, \\
		u   = f                                   & \text{on } \partial \Omega,
	\end{cases}
\end{equation}
where $A= [a_{ij}(X, t)]$ is a $n\times n$ matrix satisfying the uniform ellipticity condition with $X \in \R^n$, $t\in \R$ and $(X,t)\in\Omega$.
That is, there exists positive constants $\lambda$ and $\Lambda=\|A\|_{L^\infty}$ such that
\begin{equation}
	\label{E:elliptic}
	\lambda |\xi|^2 \leq \sum_{i,j} a_{ij}(X,t) \xi_i \xi_j \
\end{equation}
for almost every $(X,t) \in \Omega$ and all $\xi \in \R^n$. We shall assume that $|B(X,t)|\lesssim \delta^{-1}(X,t)$,
where $\delta$ is the parabolic distance of a point $(X,t)$ to the boundary of $\partial\Omega$. 
The elliptic analogue of this problem is that $\Omega\subset{\mathbb R^n}$ and the corresponding PDE is $\divg (A \nabla u)+B\cdot \nabla u=0$ in $\Omega$.

The most classical boundary value problem is the $L^p$ Dirichlet problem ($1<p<\infty$) where for any given boundary data $f\in L^p(\partial\Omega)$ we want to find a solution  $u:\Omega\to\mathbb R$ of \eqref{E:pde} such that its non-tangential maximal function $N(u)$ belongs to $L^p(\partial\Omega)$ and the estimate
\begin{equation}\label{defDP}
\|N(u)\|_{L^p(\partial\Omega)}\le C\|f\|_{L^p(\partial\Omega)}
\end{equation}
holds for some $C>0$ only depending on the PDE, domain and $p$. The estimate \eqref{defDP} makes perfect sense in both elliptic and parabolic settings. The non-tangential maximal function $N$ is defined using non-tangential approach regions $\Gamma(\cdot)$ with vertices on $\partial\Omega$. Very minimal regularity of the boundary domain $\Omega$ is required to meaningfully define these. In the elliptic settings it suffices to have $\Omega$ satisfying the corkscrew condition and $n-1$-Ahlfors regular boundary (in order to define measure on $\partial\Omega$). See \cite{AHMMT, DPext, MPT} and many others.\vglue1mm

There has been a recent substantial progress in considering the parabolic $L^p$ Dirichlet problem on domains with minimal regularity. In \cite{HLN1, HLN2} the notion of parabolic uniform rectifiability was introduced and subsequently \cite{BHHLN} has shown that the bounded solutions of the heat equation $\partial_t-\Delta u=0$ on $\Omega$ have the usual Carleson measure estimates.

To set this into a wider context, in a series of works J.
Lewis and his collaborators \cite{H1, H2, HL96, HL2, LM, LS}, showed that the \lq\lq good"
parabolic graphs for parabolic singular integrals and parabolic potential theory are
regular $Lip(1,1/2)$ graphs, that is, graphs which are $Lip(1,1/2)$ (in space-time coordinates) and which possess extra regularity in time in the sense that a (non-local)
half-order time-derivative of the defining function of the graph belongs to parabolic BMO space. Papers such as \cite{DDH, DH, DPP, HL} show solvability of the parabolic Dirichlet problem on domains of this type under very mild assumptions on coefficients of the parabolic PDE.
Recently,  \cite{BHMN} has shown that  these conditions on the domain are both necessary and sufficient on graph-like domains. The concept of parabolic uniform rectifiability is a generalisation of this concept to non-graph domains.\vglue2mm

The $L^p$ Regularity problem, is again a Dirichlet problem but with more regular boundary data. The elliptic problem has seen a substantial recent development with solvability established on Lipschitz domains under small Carleson condition of coefficients (\cite{DPcpl, DPR, DR}), and large Carleson condition (\cite{DHP, MPT}). The later paper actually considers domains more general than Lipschitz, namely domains that satisfy the corkscrew condition and have uniformly $n-1$-rectifiable boundary.  The formulation of what is the $L^p$ 
Regularity problem is (now) non-controversial on a very general class of domains. The use of the Haj\l{}asz-Sobolev space was a crucial ingredient in the \cite{MPT} paper on the Regularity problem. For the case of coefficients independent of one (transversal) direction see \cite{HKMP2}.

Given a boundary data $f$ having one derivative in $L^p(\partial\Omega)$ we see a solution $u$ such that we have one extra derivative on the left-hand side of \eqref{defDP}, that is
\begin{equation}\label{defDPelipt}
\|\tilde{N}(\nabla u)\|_{L^p(\partial\Omega)}\le C\|\nabla_T f\|_{L^p(\partial\Omega)}.
\end{equation}
Here $\nabla_T f$ is the tangential derivative of $f$ (which is a well defined object on Lipschitz domains); on more general domains we consider instead on the right-hand side the norm in Haj\l{}asz-Sobolev space 
$\dot{M}^{1,p}(\partial\Omega)$ (see below for definition).\vglue2mm

In order to properly formulate the Regularity problem for parabolic PDEs we first note that due to natural parabolic scaling $\nabla u$ scales like a half derivative time-derivative $D^{1/2}_tu $, since the second derivative $\nabla^2 u$ behaves like $\partial_t u$. However, $D^{1/2}_tu $ is a non-local object which requires some thoughts on how to deal with it and what conditions to impose.

The initial formulation of the  Regularity problem is due Fabes and Rivi\`ere \cite{FR} who formulated it for $C^1$ cylinders which was then re-formulated to Lipschitz cylinders by Brown \cite{Bro87, Bro89}. Here by  Lipschitz cylinders we understand domains of the form $\Omega=\mathcal{O}\times [0,T]$ where $\mathcal O\subset\mathbb {R}^n$ is a bounded Lipschitz domain  and time variable runs over a bounded interval $[0,T]$. 
Subsequent authors such as Mitrea \cite{M}, Nystr\"om \cite{Nys06} or 
Castro-Rodr\'iguez-L\'opez-Staubach \cite{CRS} have dealt with this issue in a similar way.

Brown considered solutions  on $\Omega$ with initial data $u=0$ at $t=0$ and naturally extending $u$ by zero for all $t<0$. Then a definition of half-derivative using fractional integrals $I_\sigma$ 
for functions $f\in C^\infty(-\infty,T)$ is used to give meaning to $D^{1/2}_t f$ and $D^{1/2}_t u$. 

A variant of this approach (which we present here) is to consider the parabolic PDE on on infinite cylinder 
$\Omega=\mathcal O\times \mathbb R$. This can be always be achieved by extending the coefficients by setting
$A(X,t)=A(X,T)$ for $t>T$ and $A(X,t)=A(X,0)$ for $t<0$. Since the parabolic PDE has a defined direction of time the solution on the infinite cylinder $\Omega=\mathcal O\times \mathbb R$ with boundary data $u\big|_{\partial\Omega}=0$ for $t<0$ will coincide with the solution on the finite cylinder $\mathcal O\times [0,T]$ with zero initial data.\vglue1mm

Having a well-defined half-derivative the authors \cite{Bro89, CRS, M, Nys06} then define the parabolic Regularity problem by asking for
\begin{equation}\label{defDPpara}
\|\tilde{N}(\nabla u)\|_{L^p(\partial\Omega)}+\|\tilde{N}(D^{1/2}_tu)\|_{L^p(\partial\Omega)}\le C(\|\nabla_T f\|_{L^p(\partial\Omega)}+\|D^{1/2}_t f\|_{L^p(\partial\Omega)}).
\end{equation}
Some authors also ask for control of $\|\tilde{N}(D^{1/2}_tH_tu)\|_{L^p}$, where $H_t$ is the one-dimensional Hilbert transform in the $t$-variable.

This approach is reasonable, if the domain is not time-varying, i.e. of the form $\mathcal O\times (t_0,t_1)$ but 
runs immediately into an obvious issue when this is no longer true, as any reasonable definition of half-derivative requires $u(X,t)$ to be defined for all $t\in\mathbb R$ or at least on a half-line.\vglue1mm

As an illustration, consider the simplest possible case of a domain of the form 
$\om=\set{(x,x_n,t):\, x\in\R^{n-1},\, x_n>\psi(x,t), \, t\in\R}$, for some continuous function $\psi:\mathbb R^{n-1}\times\mathbb R\to\mathbb R$. Unless $\psi(x,t)=\psi(x)$, i.e. the function is time-independent the issue with defining half-derivative will arise (at least for points  near the boundary). 

Recalling works J. Lewis and his collaborators \cite{HL99} mentioned above, under the condition
\begin{equation}\label{psicond}
\|\nabla \psi\|_{L^\infty}<\infty\qquad\mbox{and}\qquad D_t^{1/2}\psi\in BMO(\mathbb R^{n-1}\times \mathbb R).
\end{equation}
a natural change of variables in the parabolic settings (certainly motivated by analogous change of variables in the elliptic settings as in for example \cite{N}) maps 
$\rho:\Omega\to U:=\mathbb R^n_+\times \mathbb R$ which is domain that is constant in time. By transferring the parabolic PDE from $\Omega$ to $U$ we are now in a the previous situation where  half time-derivative can be defined without an issue. The map $\rho$ is bijection that also preserves ellipticity and has comparable non-tangential cones and norms derived from them. However, we pay certain price, namely that the new PDE on $U$ has a drift (first-order term) of the form $B\cdot\nabla u$. For more details see for example \cite{HL}. This is also the reason why our PDE \eqref{E:pde} does include the drift term $B\cdot\nabla u$ so that the new PDE on $U$ falls into the framework considered here. 

\vglue1mm

We therefore ask in this paper the following key question: 
$$\mbox{\it Is the presence of the term $\|\tilde{N}(D^{1/2}_tu)\|_{L^p(\partial\Omega)}$ and perhaps also}$$
$$\mbox{\it 
$\|\tilde{N}(D^{1/2}_tH_tu)\|_{L^p(\partial\Omega)}$ needed in \eqref{defDPpara}?}$$

If not, then asking for control only of $\|\tilde{N}(\nabla u)\|_{L^p(\partial\Omega)}$ opens significant new avenues of research. It might be possible to consider domains $\Omega$ that satisfy
as in \cite{HLN1, HLN2} parabolic uniform rectifiability  but are not necessary graph or locally graph-domain.\vglue2mm

Our main result is that this is indeed the case. That is if $\partial\Omega=\mathcal O\times\mathbb R$
so that we can define the operators $D^{1/2}_t$ and $H_t$ and $u:\Omega\to \mathbb R$ is a solution then bounds for $\|\tilde{N}(D^{1/2}_tu)\|_{L^p(\partial\Omega)}$ and 
$\|\tilde{N}(D^{1/2}_tH_tu)\|_{L^p(\partial\Omega)}$ follow from bounds for just $\|\tilde{N}(\nabla u)\|_{L^p(\partial\Omega)}$.

\begin{theorem}\label{timp} Fix $1<p<\infty$ and consider $\Omega={\mathcal O}\times \mathbb{R}$ such that 
$\mathcal O\subset\mathbb R^n$ is a uniform domain (aka $1$-sided NTA domain) and has 
$n-1$-Ahlfors regular boundary.
Suppose that $u:\Omega\to\mathbb R$ is a function that solves\footnote{$u$ needs to be a reinforced weak solution (defined precisely below) on a set $\Omega$; that is, it satisfies the usual condition of being a weak solution:
$\iint_{\Omega} A\nabla u\cdot{\nabla \phi} \d X \d t - \iint_{\Omega} u \cdot {\partial_{t}\phi} \d X  \d t=0$ for test functions $\phi$ and additionally $u\in H_{\loc}^{1/2}(\R; \L^2_{\loc}(\mathcal O))$.}
\begin{align}\label{eq-pp}
\begin{cases}
Lu&=\partial_tu-\mbox{\rm div}(A\nabla u)+B\cdot\nabla u =0 \quad\text{ in } \Omega\\
u|_{\partial \Omega} &= f \quad\text{ on } \partial \Omega.
\end{cases}
\end{align}
Here $A:\Omega\to M_{n\times n}(\mathbb R)$ is a bounded uniformly elliptic matrix-valued function and $B:\Omega\to {\mathbb R}^n$ is a vector such that $|B|\lesssim\delta(X,t)^{-1}$. The solution $u$ is understood to attain it boundary data $f$ by limiting non-tangentially almost everywhere.\vglue1mm

Then there exist a constant $C(L,\Omega,p)>0$ (but not on $u$) such that if
$\tilde{N}(\nabla u)$ belongs to $L^p(\partial\Omega)$ and $f$ belongs to $\dot{L}_{1,1/2}^p(\partial\Omega)$ we have the estimate:
\begin{align}
\label{Half-main}
\|\tilde{N}(D^{1/2}_tu)\|_{L^p(\partial\Omega)}+\|\tilde{N}(D^{1/2}_tH_tu)\|_{L^p(\partial\Omega)}\le C\left[\|\tilde{N}(\nabla u)\|_{L^p(\partial\Omega)}+\|f\|_{\dot{L}^p_{1,1/2}(\partial\Omega)}\right].
\end{align}
\end{theorem}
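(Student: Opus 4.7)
The strategy is to reduce the estimates to a kernel-level analysis of the half-derivative by using the PDE to trade time regularity for spatial regularity. The starting point is the pointwise representations
\begin{equation*}
D^{1/2}_t u(X,t) = c\int_{\mathbb R} \frac{u(X,t) - u(X,s)}{|t-s|^{3/2}}\, ds, \qquad
D^{1/2}_t H_t u(X,t) = c'\int_{\mathbb R} \frac{\operatorname{sgn}(t-s)\,\bigl[u(X,t) - u(X,s)\bigr]}{|t-s|^{3/2}}\, ds,
\end{equation*}
valid, after suitable regularization, because $\Omega = \mathcal{O}\times\mathbb{R}$ so the time variable extends globally. I would split the $s$-integral at the parabolic Whitney scale $|t-s|\sim\delta(X,t)^{2}$ into a local and a nonlocal piece, and estimate each by a different mechanism. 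The two kernels above differ only by a sign, so the bounds for $D^{1/2}_t u$ and $D^{1/2}_t H_t u$ run in parallel; it suffices to present one.

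\textbf{Local part.} For $|t-s|\le\delta(X,t)^{2}$, $(X,s)$ still lies in a parabolic Whitney region comparable to $(X,t)$, so I would use the PDE $\dr_t u = \divg(A\nabla u)-B\cdot\nabla u$ together with interior Caccioppoli/Moser-type estimates (standard for reinforced weak solutions with $|B|\lesssim\delta^{-1}$) to convert time regularity of $u$ to spatial regularity: on such a Whitney region one obtains $|u(X,t)-u(X,s)|\lesssim |t-s|\,\delta(X,t)^{-1}\,\sup_{Q}|\nabla u|$ where $Q$ is the enlarged Whitney box. Integrating against the singular kernel yields
\begin{equation*}
|D^{1/2}_t u(X,t)|_{\rm loc}\;\lesssim\; \delta(X,t)^{-1}\,\tilde N(\nabla u)(\hat X,\hat t)\,\int_{0}^{\delta^{2}}\frac{ds}{\sqrt{s}}\;\lesssim\;\tilde N(\nabla u)(\hat X,\hat t),
\end{equation*}
with $(\hat X,\hat t)\in\partial\Omega$ the non-tangential projection of $(X,t)$; this gives the desired $L^{p}$-bound directly.

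\textbf{Nonlocal part.} For $|t-s|>\delta(X,t)^{2}$ I would replace $u(X,s)$ and $u(X,t)$ by boundary values using the uniform and Harnack-chain structure of $\mathcal{O}$: telescoping along a Whitney chain from $X$ to a boundary point $\hat X\in\partial\mathcal{O}$ converts $u(X,\cdot)-u(\hat X,\cdot)$ into an integral of $|\nabla u|$ along the chain, picking up a controlled multiple of $\tilde N(\nabla u)$. After this replacement the principal part of the nonlocal integral becomes essentially $\int (f(\hat X, t)-f(\hat X,s))/|t-s|^{3/2}\,ds$, which is $D^{1/2}_t f(\hat X,t)$ up to truncation at the scale $\delta(X,t)^{2}$. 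The truncation error, and the spatial-replacement error, are dominated by a time-wise Hardy--Littlewood maximal function of $\tilde N(\nabla u)$ and of $D^{1/2}_t f$ along $\partial\Omega$. Taking $L^{p}$-norms over $\partial\Omega$ and using boundedness of these maximal functions produces \eqref{Half-main}; the $H_t$-variant follows identically from the signed kernel.

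\textbf{Main obstacle.} The serious difficulty is the nonlocal part, where the kernel $|t-s|^{-3/2}$ barely decays and the boundary-replacement error must sum in $L^{p}$ across all Whitney scales without logarithmic loss. This forces a careful scale-by-scale decomposition: at scale $2^{k}\delta(X,t)^{2}$ the Whitney chain from $X$ to $\hat X$ must be exploited to produce both a pointwise bound by $\tilde N(\nabla u)$ and an orthogonality/packing estimate that is summable. The uniform rectifiability of $\partial\mathcal{O}$ is used to control these sums through standard Carleson-type packings. A secondary subtlety is that the coefficient $A$ has no time regularity, so one must avoid any commutator between $D^{-1/2}_t$ and $A$ and work purely from the kernel representation above combined with interior PDE estimates; this is why I stay at the level of pointwise identities instead of invoking the Fourier identity $D^{1/2}_t H_t u = D^{-1/2}_t\,\dr_t u$ as anything more than a heuristic. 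Finally, identifying the non-tangential boundary trace of $D^{1/2}_t u$ with $D^{1/2}_t f$ requires a separate (but standard) argument, using dominated convergence inside the kernel representation once the $L^{p}$-bound is in place.
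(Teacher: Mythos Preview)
The nonlocal part of your plan is close in spirit to the paper's argument (the paper also telescopes $u$ down to the boundary along Harnack chains and recovers a maximal function of $\tilde N(\nabla u)$ after a dyadic-in-time summation), but the local part contains a real gap.

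Your local estimate hinges on the pointwise claim
\[
|u(X,t)-u(X,s)|\;\lesssim\;|t-s|\,\delta(X,t)^{-1}\,\sup_{Q}|\nabla u|,
\]
which would require $|\partial_t u|\lesssim \delta^{-1}\sup_Q|\nabla u|$. But $\partial_t u=\divg(A\nabla u)-B\cdot\nabla u$, and with $A$ merely bounded measurable the term $\divg(A\nabla u)$ is only an element of $H^{-1}_{\loc}$; there is no pointwise bound, no Caccioppoli or Moser estimate that produces one, and for systems (to which the theorem is stated to apply) even De Giorgi--Nash--Moser H\"older regularity is unavailable. Relatedly, $\sup_Q|\nabla u|$ need not be finite, which is precisely why $\tilde N$ is defined through $L^2$ averages. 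So the integration of your pointwise bound against the singular kernel cannot be justified.

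The paper's substitute for this step is an $L^2$ energy argument that never leaves the weak formulation: with $v=u-\fiint_Q u$ and a space--time cutoff $\Phi$, one tests the equation against $-H_t(v\Phi)\Phi$ and uses $\partial_t=D^{1/2}_t H_t D^{1/2}_t$ to obtain
\[
\iint |D^{1/2}_t(v\Phi)|^2 \;\lesssim\; \iint_{3Q}|\nabla u|^2 \;+\; \delta^{-2}\iint_{3Q}|v|^2,
\]
after which the $|v|^2$ term is killed by Poincar\'e in space together with a second use of the PDE (tested against a purely spatial cutoff) to control the time oscillation of spatial means. This is the step you are missing, and it is exactly the mechanism that makes the proof work for $L^\infty$ coefficients and for systems.

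A smaller issue: in your nonlocal part you assert that the ``truncation error'' between $D^{1/2}_t f$ and its restriction to $|t-s|>\delta^2$ is dominated by a maximal function of $D^{1/2}_t f$. Pointwise this fails (the local piece of the kernel applied to an $\dot H^{1/2}$ function is not controlled by the Haj\l asz bound $|f(t)-f(s)|\le|t-s|^{1/2}(h(t)+h(s))$, since $\int_{|t-s|<\delta^2}|t-s|^{-1}\,ds$ diverges). The paper avoids this by never truncating $D^{1/2}_t f$: it writes $D^{1/2}_t u$ as $D^{1/2}_t f$ plus three other pieces built with a smooth time cutoff $\varphi$, and handles the boundary commutator $D^{1/2}_t\bigl((f-\fint_\Delta f)\varphi\bigr)-\varphi\,D^{1/2}_t f$ via the Lipschitz regularity of $\varphi$ (Proposition~\ref{ploc}), which is the extra ingredient that compensates for the loss $\dot F^{1/2}_{p,2}\hookrightarrow\dot F^{1/2}_{p,\infty}$.
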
\vglue2mm

\noindent{\it Remark 1.} Although not stated here this result also applies to parabolic systems. There are only  two places in our argument where the PDE is actually used and for any parabolic system  
the argument given here also applies.\vglue2mm

\noindent{\it Remark 2.} In the definition above we are very liberal about the notion of a solution. Clearly, assuming some $L^p$ norm of $\tilde{N}(\nabla u)$ implies that in the interior of $\Omega$ we have a well-defined gradient $\nabla u\in L^2_{loc}(\Omega)$. We further need that half-derivative $D^{1/2}_t$ can be moved from $\partial_t$ onto a test function as otherwise it is impossible to make sense of $D^{1/2}_tu$ and discuss nontangential maximal function of it.
These solutions (called reinforced weak solutions) can be constructed using variety of techniques, for example 
via the \lq\lq hidden coercivity method" as in \cite{ABES, AEN} or by the usual construction of the parabolic measure. The class $\dot {\E}_{\loc}(\Omega)$ is the minimal one in which the objects such as $\tilde{N}(D^{1/2}_tu)$, $\tilde{N}(D^{1/2}_t\HT u)$ are well-defined.
\vglue2mm

In the context of Theorem \ref{timp} let us precisely define the space $\dot{L}^p_{1,1/2}(\partial\Omega)$ we consider here. Recall that 
with $\mathcal O$ as in Theorem \ref{timp}, we may consider the Haj\l{}asz-Sobolev space $\dot M^{1,p}(\partial O)$ (as in \cite{MPT}) consisting of functions $f\in L^p_{loc}(\partial \mathcal O)$ such that for some  $g\in L^p(\partial \mathcal O)$ 
\begin{equation}\label{split1bz}
|f(x)-f(y)|\le |x-y|[g(x)+g(y)]\qquad\mbox {for almost every }x,y\in\partial\mathcal O.
\end{equation}
The norm is given as the infimum of $L^p$ norms of all such $g$. This motivates the following
natural generalisation of the usual parabolic $\dot{L}^p_{1,1/2}$ space on $\partial\mathcal O\times\R$.

\begin{definition} Let $\mathcal O\subset\mathbb R^n$ is a uniform domain with  $n-1$-Ahlfors regular boundary. Given a space-time function $f:\partial\mathcal O\times\mathbb R
\to\mathbb R$ we define $\dot M_x^{1,p}(\partial \mathcal O\times\mathbb R)$ as follows. Denote by $\mathcal D_s(f)$ (the subscript $s$ indicates we are only considering the spatial \lq\lq gradient") the space consisting of function $g\in L_{loc}(\partial\mathcal O\times\mathbb R)$ such that for  a.e. $t\in\mathbb R$ we have:
\begin{equation}\label{m1}
|f(x,t)-f(y,t)|\le |x-y|[g(x,t)+g(y,t)]\qquad\mbox {for almost every }x,y\in\partial\mathcal O,
\end{equation}
with $\|f\|_{\dot M_x^{1,p}}:=\inf\{\|g\|_{L^p}:\, g\in\mathcal D_s(f)\cap L^p\}$.
We shall then define
\begin{equation}\label{SpaceL}
\dot{L}^p_{1,1/2}(\partial\mathcal O\times\mathbb R)=\{f\in L^p_{loc}(\partial\mathcal O\times\mathbb R):\,\|f\|_{\dot M_x^{1,p}}+\|D^{1/2}_tf\|_{L^p}<\infty\}.
\end{equation}
\end{definition}

Finally, we may now extend the definition given above to the class of time-varying domains and define the parabolic  $L^p$  Regularity problem for such class of domains. 

\begin{definition} \label{DefRpar} Let $\Omega\subset \mathbb R^n\times \mathbb R$ be space-time domain
such that for a.e. $t\in\R$ the sets $\mathcal O_t:=\Omega\cap \{t=const\}$ are uniform domains with $n-1$-Ahlfors regular boundary $\partial\mathcal O_t$ and the constants in the Definitions \ref{def1.cork}-\ref{def-ahlf} do not depend on $t$.

It follows that for any fixed $1<p<\infty$ we have for $f:\partial\Omega\to \mathbb R$ a well-defined notion of one spatial derivative, namely by asking for $g\in L^p(\partial\Omega)$
\begin{equation}
|f(X,t)-f(Y,t)|\le |X-Y|(g(X,t)+g(Y,t)),
\end{equation}
for a.e. $t\in \mathbb R$ and all $(X,t)$ and $(Y,t)$ on $\partial\Omega$ except perhaps a set of surface measure zero. Taking infimum of $L^p$ norms over all such functions $g$ defines the $\dot M_x^{1,p}(\partial\Omega)$ norm.

Similarly, assume a well-defined notion of fractional half-regularity of $f$ in the $t$-variable and call the resulting space of functions $\dot{\mathcal X}^p_{1,1/2}$. When $\Omega$ is a time cylinder the space 
$\dot{\mathcal X}^p_{1,1/2}$ has to coincide with the space  $\dot{L}^p_{1,1/2}(\partial\Omega)$ defined earlier.\vglue1mm

We say that the $L^p$ Regularity problem for the parabolic PDE problem \eqref{eq-pp} is solvable if there exists a linear functional $T:f\mapsto u$ such that $u:\Omega\to\mathbb R$ solves the equation
\eqref{eq-pp},  attains on $\partial \Omega$ datum $f$ non-tangentially almost eveywhere and for some $C=C(L,\Omega,p)>0$ we have:
\begin{align}
\label{RPpar} \quad\|\widetilde{N}(\nabla u)\|_{L^p(\partial
\Omega)}\le C\| f\|_{\dot{\mathcal X}_{1,1/2}^{p}(\partial\Omega)}.
\end{align}
\end{definition}

\begin{corollary} By Theorem \ref{timp}
the above notion of $L^p$ solvability of the Regularity problem coincides with the definition of the Regularity problem as given previously in \cite{Bro87, Bro89, CRS, M, Nys06} on Lipschitz cylinders $\Omega={\mathcal O}\times \mathbb R$ or on
time-varying domains $\om=\set{(x,x_n,t):\, x\in\R^{n-1},\, x_n>\psi(x,t), \, t\in\R}$ for $\psi$ satisfying the usual 
condition \eqref{psicond}.
\end{corollary}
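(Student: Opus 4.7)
The plan is to establish both implications of the claimed equivalence, treating the Lipschitz cylinder and the time-varying graph domain in parallel.

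First I would verify that on these particular domains the abstract norm $\|f\|_{\dot{\mathcal{X}}^p_{1,1/2}(\partial\Omega)}$ from Definition \ref{DefRpar} is comparable to the classical expression $\|\nabla_T f\|_{L^p(\partial\Omega)}+\|D^{1/2}_t f\|_{L^p(\partial\Omega)}$ appearing on the right-hand side of \eqref{defDPpara}. On a Lipschitz cylinder $\partial\Omega=\partial\mathcal{O}\times\mathbb{R}$ the boundary factorizes and the Hajłasz gradient recovers $\|\nabla_T f\|_{L^p}$ as a standard characterization of $W^{1,p}(\partial\mathcal{O})$ valid for locally Lipschitz boundaries, while the time component decouples. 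On the graph domain $\Omega=\{x_n>\psi(x,t)\}$, I would use the change of variables $\rho:\Omega\to\mathbb{R}^n_+\times\mathbb{R}$ recalled in the introduction; under \eqref{psicond}, the map $\rho$ is parabolically bi-Lipschitz, preserves non-tangential cones and norms, and intertwines $\dot{\mathcal{X}}^p_{1,1/2}(\partial\Omega)$ with $\dot{L}^p_{1,1/2}$ on the flat boundary, which coincide by the hypothesis built into Definition \ref{DefRpar}.

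With these identifications in place, the forward direction ``old $\Rightarrow$ new'' is immediate: if \eqref{defDPpara} holds, discarding the non-negative half-derivative terms from its left-hand side and re-expressing its right-hand side as $\|f\|_{\dot{\mathcal{X}}^p_{1,1/2}}$ yields exactly \eqref{RPpar}. The reverse direction ``new $\Rightarrow$ old'' is where Theorem \ref{timp} is invoked: assuming \eqref{RPpar} and noting that $\|D^{1/2}_t f\|_{L^p}\lesssim\|f\|_{\dot{\mathcal{X}}^p_{1,1/2}}$, applying Theorem \ref{timp} to the same solution $u$ gives
\begin{equation*}
\|\tilde{N}(D^{1/2}_t u)\|_{L^p(\partial\Omega)}+\|\tilde{N}(D^{1/2}_t H_t u)\|_{L^p(\partial\Omega)}\le C\bigl(\|\tilde{N}(\nabla u)\|_{L^p}+\|D^{1/2}_t f\|_{L^p}\bigr)\le C\|f\|_{\dot{\mathcal{X}}^p_{1,1/2}},
\end{equation*}
and combining this with \eqref{RPpar} reproduces \eqref{defDPpara} after rewriting the right-hand side in the classical norm.

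The only non-trivial step is Step~1 on the graph domain: one has to check that pullback under $\rho$ induces an isomorphism between $\dot{\mathcal{X}}^p_{1,1/2}(\partial\Omega)$ and $\dot{L}^p_{1,1/2}(\partial(\mathbb{R}^n_+\times\mathbb{R}))$. The spatial component is controlled by $\|\nabla\psi\|_{L^\infty}<\infty$, while the time component requires commutator estimates for $D^{1/2}_t$ against the Jacobian factors of $\rho$, which are available precisely because of the $BMO$ hypothesis on $D^{1/2}_t\psi$ in \eqref{psicond}; this sort of comparison already underpins the analysis of \cite{HL}. Once that identification is granted, the two-line argument above closes the corollary.
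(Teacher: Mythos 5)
Your proposal is correct and matches the argument the paper intends (the corollary is stated without a separate proof, the point being exactly the two-line deduction you give: the forward direction by discarding the half-derivative terms, the reverse by invoking Theorem \ref{timp}, with the norm identifications supplied by the Hajłasz/Triebel--Lizorkin equivalence of Section 2.1 and, on graph domains, by the pullback $\rho$ whose drift term is the reason Theorem \ref{timp} allows $B\cdot\nabla u$). No gaps.
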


\noindent{\bf Acknowledgement:} The author is grateful to the referees for suggestions that have improved the result and have shortened some of the arguments.

\section{Definitions}
\setcounter{equation}{0}

\subsection{Domains $\Omega=\mathcal O\times\R$.}%
\label{S:paraSobolev}

Let us recall some important definitions. 

\begin{definition}[\bf Corkscrew condition]\label{def1.cork}
 \cite{JK}.  A domain $\mathcal O\subset {\mathbb R}^{n}$
satisfies the {\it Corkscrew condition} if for some uniform constant $c>0$ and
for every surface ball $\Delta:=\Delta(q,r),$ with $q\in \partial\mathcal O$ and
$0<r<\diam(\partial\mathcal O)$, there is a ball
$B(X_\Delta,cr)\subset B(q,r)\cap\mathcal O$.  The point $X_\Delta\subset \mathcal O$ is called
a {\it corkscrew point relative to} $\Delta,$ (or, relative to $B$). We note that  we may allow
$r<C\diam(\partial\mathcal O)$ for any fixed $C$, simply by adjusting the constant $c$.
\end{definition}

\begin{definition}[\bf Harnack Chain condition]\label{def1.hc}
\cite{JK}. Let $\delta(X)$ denote the distance of $X \in \mathcal O$ to $\partial \mathcal O$. A domain
$\mathcal O$ satisfies the {\it Harnack Chain condition} if there is a uniform constant $C$ such that
for every $\rho >0,\, \Lambda\geq 1$, and every pair of points
$X,X' \in \mathcal O$ with $\delta(X),\,\delta(X') \geq\rho$ and $|X-X'|<\Lambda\,\rho$, there is a chain of
open balls
$B_1,\dots,B_N \subset \mathcal O$, $N\leq C(\Lambda)$,
with $X\in B_1,\, X'\in B_N,$ $B_k\cap B_{k+1}\neq \emptyset$
and $C^{-1}\diam (B_k) \leq \dist (B_k,\partial\mathcal O)\leq C\diam (B_k).$  The chain of balls is called
a {\it Harnack Chain}.
\end{definition}

\begin{definition}[\bf Uniform domains]\label{def1.1nta}
If $\mathcal O$ satisfies both the Corkscrew and Harnack Chain conditions, then
$\mathcal O$ is a {\it uniform domain}.
\end{definition}

\begin{definition}[\bf Ahlfors regularity]\label{def-ahlf} For a domain $\mathcal O\subset\R^n$ we say that
$\partial \mathcal O$ satisfies $n-1$-dimensional Ahlfors condition (or is $n-1$-Ahlfors regular) if there exists $C>0$ such that
for all $x\in\partial\Omega$ and all $r\le\mbox{diam}(\mathcal O)$
$$C^{-1}r^{n-1}\le\mathcal H^{n-1}(\partial\mathcal O\cap B(x,r))\le Cr^{n-1}.$$
\end{definition}

\subsection{Reinforced weak solutions}

We recall the paper \cite{AEN} that neatly presents the concept of reinforced weak solutions for the parabolic problem we are interested in.\vglue1mm

If $\mathcal O$ is an open subset of $\mathbb R^{n} $, we let $H^1(\mathcal O)=W^{1,2}(\mathcal O)$ be the standard Sobolev space of real valued functions $v$ defined on $\mathcal O$, such that $v$ and $\nabla v$ are in $\L^{2}(\mathcal O;\R)$ and $\L^{2}(\mathcal O;\R^n)$, respectively. A subscripted `$\loc$' will indicate that these conditions hold locally.

We shall say that $u$ is a \emph{reinforced weak solution} of $\partial_t u-\divg(A\nabla u)=0$ on $\Omega=\mathcal O\times \R$ if
\begin{align*}
 u\in \dot {\E}_{\loc}(\Omega):= H_{\loc}^{1/2}(\R; L^2_{\loc}(\mathcal O)) \cap \Lloc^2(\R; W^{1,2}_{\loc}(\mathcal O))
\end{align*}
and if for all $\phi,\psi \in \C_0^\infty(\Omega)$,
\begin{equation}\label{2.10}
\iint_{\Omega}\left[
 A\nabla u\cdot{\nabla (\phi\psi)}+ \HT\dhalf (u\psi)\cdot {\dhalf \phi}+ \HT\dhalf (u\phi)\cdot {\dhalf \psi}\right]\, \d X \d t=0.
 \end{equation}
Here, $\dhalf$ is the half-order derivative and $\HT$ the Hilbert transform with respect to the $t$ variable, designed in such a way that $\partial_{t}= \dhalf \HT \dhalf$. The space $\Hdot^{1/2}(\R)$ is the homogeneous Sobolev space of order 1/2 (it is the completion of $\C_0^\infty(\R)$ for the norm $\|\dhalf (\cdot)\|_{2}$ and, modulo constants, it embeds into the space $\mS'(\R)/\IC$ of tempered distributions modulo constants).
By  $H_{\loc}^{1/2}(\R)$ we mean functions $u$ such that $u\phi\in \Hdot^{1/2}(\R)$ for all $\phi\in \C_0^\infty(\Omega)$.
Here we have departed slightly from \cite{AEN} as in their definition of $\dot {\E}_{\loc}(\Omega)$ the space $H_{\loc}^{1/2}$ is replaced by $ \Hdot^{1/2}$ and $\psi\equiv 1$. For such $u$ \eqref{2.10} simplifies to 
\begin{equation}\nonumber
\iint_{\Omega}\left[
 A\nabla u\cdot{\nabla \phi}+ \HT\dhalf u\cdot {\dhalf \phi}\right]\, \d X \d t=0.
 \end{equation}

 Clearly any $u\in \Hdot^{1/2}(\R)$ also belongs to $H_{\loc}^{1/2}(\mathbb R)$ and hence our notion of reinforced weak solution somewhat weaker than in \cite{AEN}. This can be seen by taking a sequence of functions $\psi_n\in C_0^\infty(\Omega)$ for which $\psi_n\to 1$ as $n\to\infty$.
 
 Our definition has advantage that by taking a cut-off of the function $u$  we might potentially improve the decay of $D^{1/2}_tu$ at infinity. In particular, this matters when considering $u\big|_{\partial\Omega}\in L^p_{1,1/2}(\partial\Omega)$ for $p>2$ as such $u$ might not decay fast enough to have
$u\in \Hdot^{1/2}(\R; \L^2_{\loc}(\mathcal O))$.

At this point we remark that for any $u\in \Hdot^{1/2}(\R)$ and $\phi,\psi\in \C_0^\infty(\R)$ the formula
\begin{align*}
 \int_{\R} \left[\HT\dhalf (u\psi)\cdot {\dhalf\phi}+\HT\dhalf (u\phi)\cdot {\dhalf\psi}\right]\d t = - \int_{\R} u \cdot {\partial_{t}(\phi\psi)} \d t
\end{align*}
holds, where on the right-hand side we use the duality form between $\Hdot^{1/2}(\R)$ and its dual $\Hdot^{-1/2}(\R)$ extending the complex inner product of $\L^2(\R)$. It follows that a reinforced weak solution is a weak solution in the usual sense on $\Omega$ as it satisfies
$u\in \Lloc^2(\R; \W^{1,2}_{\loc}(\mathcal O))$ and for all $\phi\in \C_0^\infty(\Omega)$,
\begin{align*}
 \iint_{\Omega} A\nabla u\cdot{\nabla \phi} \d X \d t - \iint_{\Omega} u \cdot {\partial_{t}\phi} \d X  \d t=0.
\end{align*}
We get this by taking $\psi=1$ on the set where $\phi$ is supported. This implies $\pd_{t}u\in \Lloc^2(\R; \W^{-1,2}_{\loc}(\mathcal O))$. Conversely, as we have already stated, any  weak solution $u$ in    $H_{\loc}^{1/2}(\R; \L^2_{\loc}(\mathcal O))$ is a reinforced weak solution.\vglue2mm

\subsection{Non-tangential approach regions and Non-tangential maximal function}
\noindent{ }

Our notion of solvability of the Dirichlet problem requires  few more definitions.
In the first place, we need to introduce a slightly non-standard notion of a nontangential approach region - such regions are typically referred to as
``cones" when the domain is Lipschitz regular, and ``corkscrew regions" for more general domains that satisfy the corkscrew condition given above. We first define non-tangential approach region on the set $\mathcal O$.\vglue1mm

In the following, the parameter
$a$ is positive and will be referred to as the ``aperture". 
A standard corkscrew region associated with a boundary point $q\in\partial\mathcal O$ is defined (\cite{JK})  to be 
$$\gamma_a(q)=\{X \in \mathcal O: |X-q|< (1+a)\delta(X)\},$$
for some $a>0$ and nontangential maximal functions, square functions are defined in the literature with respect to these regions. Here $\delta(X)=dist(X,\partial\mathcal O)$ is the Euclidean distance of a point $X$ to the boundary. Here and further below we implement the rule that lower case letters (such as $q$ denote points on $\partial\mathcal O$), while upper case letters (such as $X,Y$) denote points inside $\mathcal O$.

We modify this definition in order to achieve a certain geometric property which may not hold 
for the $\gamma_a(q)$ in general. If the domain $\mathcal O$ is Lipschitz we may stop here.

\begin{definition}\label{Cones}
For $Y \in \mathcal O$, let $S_a(Y) := \{q \in \partial \mathcal O: Y \in \gamma_a(q)\}.$
Set 
$$\tilde{S}_a(Y) := \bigcup_{q \in S_a(Y)} \{q'\in\partial\mathcal O:|q-q'|<a\delta(Y)\}.$$
Define 
$$\widetilde{\gamma}_a(q) := \{ Y \in \Omega: q \in \tilde{S}_a(Y)\}.$$
\end{definition}
It was show in \cite{DPext} that these \lq\lq novel" corkscrew regions have the property that 
for any $q \in \partial \mathcal O$,  $\gamma_a(q) \subset \widetilde\gamma_a(q)\subset\gamma_{2a}(q)$.
Thus our $\widetilde\gamma_a(q)$ is sandwiched in between two standard corkscrew regions and is thus itself a 
corkscrew region.\vglue1mm

We take advantage of the product structure of our domain $\Omega$ and for any point $(q,\tau)\in\partial\Omega=\partial\mathcal O\times\mathbb R$ and define the non-tangential cones $\Gamma_a(q,\tau)\subset\Omega$ by simply taking 
$$\Gamma_a(q,\tau)=\{(X,t)\in\Omega: X\in\widetilde\gamma_a(q)\mbox{ and }|t-\tau|<\delta(X)^2\}.$$
This is equivalent to more standard non-tangential parabolic cones defined as
$$\widetilde\Gamma_a(q,\tau)=\{(X,t)\in\Omega: d((X,t),(q,\tau))<(1+a)\delta(X,t)\},$$
where $d((X,t),(q,\tau))$ is the parabolic distance function
$$d((X,t),(q,\tau))=\left(|X-q|^2 + |t-\tau|\right)^{1/2},$$
and
$\delta(X,t)$ is the parabolic distance to the boundary
\[\delta (X,t) = \inf_{(q,\tau) \in \partial\Omega} d((X,t),(q,\tau)). \]  
In our case thanks to the fact the domain as in infinite cylinder we always have $\delta (X,t)=\delta(X)$.
It's a simple exercise to show that for any $a>0$ there exists $b=b(a)>0$ such that 
$$\widetilde\Gamma_{1/b}(q,\tau)\subset \Gamma_a(q,\tau)\subset \widetilde\Gamma_{b}(q,\tau),$$
for all boundary points $(q,\tau)$. Typically we suppress the subscript $a$ and consider it fixed and only write $\Gamma(\cdot)$. The $L^p$ norms of the non-tangential maximal function defined below are comparable for different values of apertures $a$. 

In what follows we denote by $B_r(X,t)$ a parabolic ball that is a collection of points in the background space $\R^n\times\R$ of parabolic distance $d$ less than $r$ to the point $(X,t)$.

\begin{definition}\label{D:NT} 
For $\Omega$ as above, 
the non-tangential maximal function $\tilde{N}_{p,a}$ or just $\tilde{N}_{p}$ is defined using $L^p$ averages over the interior parabolic balls in the domain $\Omega$. 
Specifically, given $w\in L^p_{loc}(\Omega)$ we set for $(q,\tau)\in\partial\Omega$:
\begin{equation}\label{SSS-3}
\tilde{N}_{p,a}(w)(q,\tau):=\sup_{(X,t)\in\Gamma_{a}(q,\tau)}W_p(w)(X,t),
\end{equation}
where, at each $(X,t)\in\Omega$, 
\begin{equation}\label{w}
W_p(w)(X,t):=\left(\fiint_{B_{\delta(X,t)/2}(X,t)}|w(Y,s)|^{p}\,dY\,ds\right)^{1/p}.
\end{equation}
When we omit the subscript $p$ we always understand that $p=2$, hence $\tilde{N}=\tilde{N}_{2}=\tilde{N}_{2,a}$ for some fixed $a>0$. If we consider a different value than $p=2$ it will be explicitly stated. We note that the radius of the parabolic ball $\delta(X,t)/2$ in \eqref{w} is chosen for convenience only, choosing for example $\delta(X,t)/8$ will lead to a different non-tangential maximal function but with comparable $L^p$ norm to the original one which can be seen by considering the level sets, for example.
\end{definition}

The regions $\Gamma_a(p,\tau)$ have the following property inherited from $\gamma_{2a}(p)$: for any pair of points $(X,t), (X',t')$ in
$\Gamma_a(p,\tau)$, there is a Harnack chain of balls connecting them - see Definition \ref{def1.hc}. The centers of the balls
in this Harnack chain will be contained in a corkscrew region $\Gamma_{a'}(p,\tau)$ of slightly larger aperture, where $a'$ depends only the geometric constants in the definition of the domain $\mathcal O$.

\section{Proof of Theorem \ref{timp}}
\setcounter{equation}{0}

\begin{proof}  We postpone the proof of the fully general case and first look at $\Omega=\mathbb R^n_+\times\mathbb R$. Clearly, when $\Omega$ is a Lipschitz domain the matters can be reduced to this case by considering  local consider projections of the $\mathcal O\times\mathbb R$ which will gives us control over truncated version of the non-tangential maximal function. This is the heart of the matter anyway as  considering regularity away from the boundary is easier.

We will address the case when $\mathcal O$ is uniform domain with $n-1$-Ahlfors regular boundary at the end by highlighting the key differences this more general case requires.\vglue2mm

Fix a boundary point $(p,\tau)\in \mathbb R^n$ and the nontangential cone $\Gamma(p,\tau)$ emanating from it. Pick an arbitrary $(X,t)\in \Gamma(p,\tau)\subset \mathbb R^n_+\times\mathbb R=\Omega$. 
Recall that $$\widetilde{N}(D^{1/2}_tu)(p,\tau)=\sup_{(X,t)\in\Gamma(p,\tau)} w(X,t),\mbox{ where: }
w(X,t)=\left(\fiint_{Q(X,t)} |D^{1/2}_t u(Y,s)|^2dY\,ds\right)^{1/2}.
$$
Here we denote by $Q(X,t)$ the parabolic region $Q(X,t)=\{(Y,s):|X-Y|<\delta(X,t)/4\,\mbox{ and }\,|s-t|<\delta^2(X,t)/16\}$.
We note that the $\delta(X,t)=x_n$ is the distance of the point to the boundary. Here the shape of the region surrounding the point $(X,t)$ does not matter as long as it contains and is contained in parabolic balls centered at $(X,t)$ of diameter $\approx \delta(X,t)$. The choice of $\delta(X,t)/4$ and $\delta(X,t)^2/16$ is just for convenience so that an enlargement $3Q(X,t)\subset \subset\Omega$. 

We find a smooth cutoff function $\varphi\in C_0^\infty (\mathbb R)$ such that $\varphi(t')=1$ for $|t-t'|<x_n^2/8$ and $\varphi(t')=0$ for $|t-t'|>3x_n^2/8$. Clearly we may also assume that $\|\partial_t\varphi\|_{L^\infty}\lesssim x_n^{-2}$. Consider the projection $\pi:\Omega\to \partial\Omega$ defined via
$$\pi:(X,t)=(x,x_n,t)\mapsto (x,0,t).$$

Let us also denote by $\Delta$ the projection of $Q$ onto the boundary, that is $\Delta=\Delta(X,t)=\{(y,0,s): \exists (y,y_n,s)\in Q(X,t)\}=\pi(Q(X,t))$. Sets like $2\Delta(X,t)$, $3\Delta(X,t)$
are defined analogously.
We write 
\begin{equation}\label{split1}
D^{1/2}_tu= D^{1/2}_t((u-\textstyle\fiint_{Q(X,t)}u)\varphi) +D^{1/2}_t((u-g)(1-\varphi))
\end{equation}
\begin{equation}\qquad\qquad\nonumber
+D^{1/2}_t((g-\textstyle\fint_{\Delta} f)(1-\varphi))+D^{1/2}_t((\textstyle\fiint_{Q(X,t)}u-\textstyle\fint_{\Delta} f)\varphi).
\end{equation}
Here we used a shortcut by denoting $f=u\circ\pi$ which makes sense since $u\big|_{\partial\Omega}=f$.
We denoted by $g$ the moving average of $u$ over regions $Q(X,t)$, that is for $r=\delta(X,t)/4$
\begin{equation}\label{defg}
g(s)=\fiint_{Q_r(X,s)}u,
\end{equation}
and in particular $\textstyle\fiint_{Q(X,t)}u=g(t)$.

With the decomposition \eqref{split1} we have an estimate of $w(X,t)$ defined above by
\begin{equation}\label{split2}
w(X,t)\lesssim w_1(X,t)+w_2(X,t)+w_3(X,t)+w_4(X,t),
\end{equation}
where $w_i$ is defined as the $L^2$ average of the $i$-th term of the right-hand side of \eqref{split1}. 

We start with the term $w_1$. We use the fact  that $u$ is a solution of \eqref{eq-pp}. We only need it to estimate the local term defining $w_1$. By \eqref{split1} we want to consider averages of  $D^{1/2}_t((u-\textstyle\fiint_{Q(X,t)}u)\varphi)$.

Recall that currently $\varphi$ is a cutoff function only in the $t$ variable. Consider another function $\psi\in C_0^\infty(\mathbb R^n_+)$ such that $|\nabla \psi|\lesssim x_n^{-1}$, $0\le \psi\le 1$ and the product function $\Phi(X,t)=\psi(X)\varphi(t)$ equals to $1$ on $2Q(X,t)$ and vanishes outside $3Q(X,t)$. We clearly have
$$\iint_{Q(X,t)}|D^{1/2}_t((u-\textstyle\fiint_{Q(X,t)}u)\varphi)|^{2}=\displaystyle\iint_{Q(X,t)}|D^{1/2}_t(( u-\textstyle\fiint_{Q(X,t)}u)\Phi)|^{2}$$
\begin{equation}\label{split49}
\lesssim \iint_{\mathbb R^n_+\times\mathbb R}|D^{1/2}_t(( u-\textstyle\fiint_{Q(X,t)}u)\Phi)|^{2}=
\displaystyle\iint_{\mathbb R^n_+\times\mathbb R}|D^{1/2}_t(v\Phi)|^{2}.
\end{equation}
Here $v=u-\textstyle\fiint_{Q(X,t)}u$. Hence $v$ solves the same PDE as $u$, namely that
$$Lv=\partial_tv-\mbox{\rm div}(A\nabla v)+B\cdot\nabla v =0.$$
We multiply above PDE by $-H_t(v\Phi)\Phi$ and integrate over our domain, where $H_t$ is the Hilbert transform in the time variable. Consider first the term that contains $v_t$. We will have
\begin{equation}\label{split50}
-\iint_{\mathbb R^n_+\times\mathbb R}(\partial_t v)H_t(v\Phi)\Phi=-
\iint_{\mathbb R^n_+\times\mathbb R}(\partial_t (v\Phi))H_t(v\Phi)+\iint_{\mathbb R^n_+\times\mathbb R}vH_t(v\Phi)\partial_t\Phi.
\end{equation}
The second term of \eqref{split50} we will treat as an \lq\lq error term" and we bound it. Recall that $H_t$ is an isometry on $L^2(\mathbb R)$ and hence 
$$\int_{t\in\mathbb R}|H_t(v\Phi)|^2\,dt=\int_{t\in\mathbb R}|(v\Phi)|^2\,dt\le \int_{t\in 3Q(X,t)}|v|^2\,dt.$$
Hence the second term of \eqref{split50} has a bound (by Cauchy Schwarz and taking into account the support of $\Phi$):
\begin{equation}\label{split51}
|E|=\left|\iint_{\mathbb R^n_+\times\mathbb R}vH_t(v\Phi)\partial_t\Phi\right|\le x_n^{-2}\iint_{3Q(X,t)}|v|^2.
\end{equation}
Here we have used the fact that $|\partial_t\Phi|\lesssim x_n^{-2}$.
Continuing with the first term on the right-hand side of \eqref{split50} we write $\partial_t=D^{1/2}_tH_tD^{1/2}_t$
and move the half derivative $D^{1/2}_t$ to the other term. This gives us
\begin{equation}\label{splitss}
-\iint_{\mathbb R^n_+\times\mathbb R}(\partial_t (v\Phi))H_t(v\Phi)=
\displaystyle\iint_{\mathbb R^n_+\times\mathbb R}|D^{1/2}_tH_t(v\Phi)|^{2}=\iint_{\mathbb R^n_+\times\mathbb R}|D^{1/2}_t(v\Phi)|^{2}.
\end{equation}
This is precisely the term we aim to estimate (see \eqref{split49}). We now consider remaining terms of our integrated PDE for $v$. We have
$$\iint_{\mathbb R^n_+\times\mathbb R}|D^{1/2}_t(v\Phi)|^{2}=-\iint_{\mathbb R^n_+\times\mathbb R}
\mbox{\rm div}(A\nabla v)H_t(v\Phi)\Phi+\iint_{\mathbb R^n_+\times\mathbb R}
(B\cdot\nabla v)H_t(v\Phi)\Phi+E.
$$
The second term after integrating by parts (and remembering that $\nabla$ and $H_t$ commute) yields three new terms to consider:
$$\iint_{\mathbb R^n_+\times\mathbb R}
(A\nabla v)H_t((\nabla v)\Phi)\Phi+\iint_{\mathbb R^n_+\times\mathbb R}
(A\nabla v)H_t(v\nabla(\Phi))\Phi+\iint_{\mathbb R^n_+\times\mathbb R}
(A\nabla v)H_t(v\Phi)\nabla\Phi.
$$
By Cauchy-Schwarz (applying it to all four terms and using the bounds on $B$) we further obtain bounds by
$$\lesssim \iint_{3Q(X,t)}|\nabla v|^2+\iint_{\mathbb R^n_+\times\mathbb R}|H_t((\nabla v)\Phi)|^2+
\iint_{\mathbb R^n_+\times\mathbb R}|H_t(v\nabla(\Phi))|^2+{x_n}^{-2}\iint_{\mathbb R^n_+\times\mathbb R} |H_t(v\Phi)|^2.$$
Here in the last term we use $|\nabla\Phi|\lesssim x_n^{-1}$. We again use the fact that $H_t$ is an $L^2$ isometry in time with allows us to remove $H_t$ from each term. Thus after taking into account the set on which $\Phi$ is supported the line above simplifies to
$$\lesssim \iint_{3Q(X,t)}|\nabla v|^2+x_n^{-2}
\iint_{3Q(X,t)}|v|^2.$$
Hence after putting all terms together:
$$\iint_{\mathbb R^n_+\times\mathbb R}|D^{1/2}_t(v\Phi)|^{2}\lesssim  \iint_{3Q(X,t)}|\nabla v|^2+x_n^{-2}
\iint_{3Q(X,t)}|v|^2.
$$
Thus we have for $w_1(X,t)$ (since $\nabla v=\nabla u$):
\begin{equation}\label{splitw1a}
w_1(X,t)\lesssim M(\tilde{N}(\nabla u))(P,\tau)+x_n^{-1}\left(\fiint_{3Q(X,t)}|v|^2\right)^{1/2}.
\end{equation}
Here we have  used the fact that $(X,t)\in\Gamma(P,\tau)$ and the definition of $\tilde{N}_2(\nabla u)$. 
For a fixed time $t_0$ let us consider averages of $u$ on a fixed time-slice $t_0$, that is
\begin{equation}\label{uav}
u_{av}(t_0):=\fiint_{Q(X,t)\cap \{t_0=const\}}u(Y,t_0)dY.
\end{equation}
By Poincar\'e inequality in the spatial variables we have that
$$\iint_{3Q(X,t)\cap \{t_0=const\}}|u-u_{av}|^2(Y,t_0)\lesssim x_n^2\iint_{3Q(X,t)\cap \{t_0=const\}}|\nabla u|^2.$$
Thus by \eqref{splitw1a} we see that
\begin{equation}\label{splitw1b}
w_1(X,t)\lesssim M(\tilde{N}(\nabla u))(P,\tau)+x_n^{-1}\left(\fint_{|s-t|<9x_n^2}|u_{av}(s)-\textstyle\fiint_{Q(X,t)}u|^2ds\right)^{1/2}.
\end{equation}
Observe that 
\begin{equation}\label{eq311}
\fiint_{Q(X,t)}u=\fint_{|s-t|<x_n^2} u_{av}(s)\,ds,
\end{equation}
and hence we really need to understand the difference $|u_{av}(s)-u_{av}(s')|$ for different times $s,s'$ in the interval $(t-9x_n^2,t+9x_n^2)$. This has been done in very general settings (quasi-linear parabolic PDEs) in \cite{KL}. In particular by Lemma 3.1 of this paper it follows that 
$$\sup_{s,s'\in (t-9x_n^2,t+9x_n^2)}\left|u_{av}(s)-u_{av}(s')\right|\lesssim x_n\fiint_{3Q(X,t)}|\nabla u|dY\le x_n
\left(\fiint_{3Q(X,t)}|\nabla u|^2\,dY\,dt\right)^{1/2}.
$$
for all $s,\,s'$ with distance to $t$ $\le 9x_n^2$.
This combined with \eqref{eq311} yields
\begin{equation}\label{splitwww}
\left(\fint_{|s-t|<9x_n^2}|u_{av}(s)-\textstyle\fiint_{Q(X,t)}u|^2ds\right)^{1/2}\lesssim  x_n
\left(\fiint_{3Q(X,t)}|\nabla u|^2\,dY\right)^{1/2},
\end{equation}
which is precisely what is needed for \eqref{splitw1b}. Thus, we have
\begin{equation}\label{splitw1}
w_1(X,t)\lesssim M(\tilde{N}(\nabla u))(P,\tau).
\end{equation}

We consider the term $w_2$ next.  For $(Y,s)\in Q(X,t)$  we have that
$$D^{1/2}_t((u- u_{av})(1-\varphi))(Y,s)=-c\int_{s'\in\mathbb R}\frac{(u-u_{av})(Y,\tilde{s})(1-\varphi(Y,\tilde{s})))}{|s-\tilde{s}|^{3/2}}d\tilde{s}.$$
Recall that the support of $1-\varphi$ is outside $2Q$ and hence $|s-\tilde{s}|\ge x_{n}^{2}\approx y_n^2$ on support $1-\varphi$. Thus we don't need to worry about the singularity of this integral at zero. It follows:
$$|D^{1/2}_t((u- u_{av})(1-\varphi))|(Y,s)\lesssim \int_{|s-\tilde{s}|\ge x_n^{2}}\frac{|u-u_{av}|(Y,\tilde{s})}{|s-\tilde{s}|^{3/2}}d\tilde{s}.$$
In particular also for any $\tau\in [t-s-x_n^2/16,t-s+x_n^2/16]=:I_{s}$
$$|D^{1/2}_t((u- u_{av})(1-\varphi))|(Y,s+\tau)\lesssim \int_{|s-\tilde{s}|\ge x_n^{2}}\frac{|u-u_{av}|(Y,\tilde{s}+\tau)}{|s-\tilde{s}|^{3/2}}d\tilde{s}.$$
We now calculate the $L^2$ norm over the region where $(Y,\tau)\in \{|Y-X|<\delta(X)/4\}\times I_s$.
By Minkowski's inequality we have that 
$$\left(\fiint_{Q(X,t)} [D^{1/2}_t((u- u_{av})(1-\varphi))(X',t')]^2\,dt'\,dX'\right)^{1/2}$$
$$\lesssim
\int_{|t'|\ge x_n^2}t'^{-3/2}\left(\fiint_{Q(X,t+t')}|u-u_{av}|^2\right)^{1/2}dt'\le  \sum_{i=0}^\infty 2^{-i/2}\fint_{|t'|\approx 2^i x_n^{2}} x_n^{-1}W(u-u_{av})(X,t+t')d{t'}.$$
Here $W(w)$ denotes the $L^2$ average of a function $w$ over a Whitney ball $B_{\delta(X,t)/2}(X,t)$.

Let us recall that $u-u_{av}$ on a fixed time slice can be estimated using Poincar\'e inequality implying
that
$$x_n^{-1}W(u-u_{av})(Y,s)\lesssim \left(\fiint_{B_{\delta(Y,s)/2}(Y,s)}|\nabla u|^2\right)^{1/2}=W(\nabla u)(Y,s).
$$
Therefore 
\begin{align}
&\left(\fiint_{Q(X,t)} [D^{1/2}_t((u- u_{av})(1-\varphi))(X',t')]^2\,dt'\,dX'\right)^{1/2}\nonumber\\
&\lesssim
\sum_{i=0}^\infty 2^{-i/2}\fint_{|\tau-t'|\le 2^i x_n^{2}}\tilde{W}(\nabla u)(P,x_n,t')dt'\nonumber\\
&
\le \sum_{i=0}^\infty 2^{-i/2}\fiint_{\{|(P-Q|<x_n/2\}\times\{|s-\tau|<x_n^2/4\}\times\{|\tau-t'|\le 2^i x_n^{2}\}}
\tilde{N}(\nabla u)(Q,s+t')dQ\,ds\,dt'\nonumber,
\end{align}
where we have first shifted the average to be centered above the point $(P,\tau)$ (our vertex). Here
$\tilde{W}$ is an average over slightly enlarged ball (which might be required due to shift from $X$ to $P$) and in the second estimate used the fact that $\tilde{W}(\nabla u)(P,x_n,t')\le \tilde{N}(\nabla u)(Q,s+t')$ for $|(P-Q|<x_n/2\}\times\{|s-\tau|<x_n^2/4\}$.
It follows that the integral in the last line is bounded by $M_t(M(\tilde{N}(\nabla u))(P,\tau)$ and hence
$$\left(\fiint_{Q(X,t)} [D^{1/2}_t((u- u_{av})(1-\varphi))(X',t')]^2\,dt'\,dX'\right)^{1/2}\lesssim 
M_t(M(\tilde{N}(\nabla u))(P,\tau).$$
Hence
\begin{equation}\label{splitw2-}
w_2(X,t)\lesssim M_t(M(\tilde{N}(\nabla u))(P,\tau)+\left(\fiint_{Q(X,t)} [D^{1/2}_t((u_{av}-g)(1-\varphi))(X',t')]^2\,dt'\,dX'\right)^{1/2},
\end{equation}
where for the last term we have that
$$D^{1/2}_t((u_{av}-g)(1-\varphi))(X',t')\lesssim \int_{|t'-\tilde{s}|\ge x_n^{2}}\frac{|u_{av}-g|(\tilde{s})}{|t'-\tilde{s}|^{3/2}}d\tilde{s}.$$
Recall that  $u_{av}$ is defined by \eqref{uav} and by  \eqref{splitwww} we see that
$$D^{1/2}_t((u_{av}-g)(1-\varphi))(X',t')\lesssim \sum_{i=0}^\infty 2^{-i/2} 
\fint_{|t-\tilde{s}|\approx 2^ix_n^2}W(\nabla u)d\tilde{s}.$$
Hence the second term of \eqref{splitw2-} enjoys similar estimates as the first one (using Minkowski) and for both we have the bound:
\begin{equation}\label{splitw2}
w_2(X,t)\lesssim M_t(M(\tilde{N}(\nabla u))(P,\tau).
\end{equation}

Consider now the third term of \eqref{split1}. Mimicking the definition \eqref{defg} we also define
similar averages of the function $f$ by
\begin{equation}\label{defh}
h(s)=\fint_{\pi(Q_r(X,s))}f,
\end{equation}
where $\pi$ is the projection onto the boundary defined earlier.

Observe that $\textstyle\fint_{\Delta} f=h(t)$
and we can write 
\begin{equation}\label{fav}
D^{1/2}_t((g-\textstyle\fint_{\Delta} f)(1-\varphi))=
D^{1/2}_t(h)-D^{1/2}_t((h-\textstyle\fint_{\Delta} f)\varphi)+D^{1/2}_t((g-h)(1-\varphi)),
\end{equation}
where for the first term we may use that $D^{1/2}_tf\in L^p$ which implies by \eqref{defh}  that
$$
(D^{1/2}_th)(s)=\fint_{\pi(Q_r(X,s))}D^{1/2}_tf(z,\tau')\,dz\,d\tau'.
$$
 Evaluating this term for $(X,s)=(x,x_n,s)$ since $(X,t)\in\Gamma(P,\tau)$ we obtain that for some $m>>1$ (depending only on the aperture of the nonntagential region $\Gamma(P,\tau)$ and thus fixed)
we obtain that $Q_r(X,s)\subset Q_{mr}(P,\tau)$ which implies that
\begin{equation}\label{defg3}
|(D^{1/2}_th)(s)|\lesssim \fint_{\pi(Q_{mr}(P,\tau))}|D^{1/2}_tf(z,\tau')|\,dz\,d\tau'\le M(|D^{1/2}_t f|)(P,\tau).
\end{equation}
This estimate takes care of the first term of \eqref{fav}. To handle the second term we recall the following result from \cite{DSa} where the following has been established as Proposition 2.44 there.
\begin{proposition}\label{prop:Product_Rule}
Let $\Omega=\mathcal O\times\R$, where $\mathcal O\subset\R^n$ is a uniform domain
with $n-1$-Ahlfors regular boundary.
Assume that for some $p>1$
 $f\in \dot{L}^p_{1,1/2}(\partial\Omega)$ (with $ \dot{L}^p_{1,1/2}$ defined by \eqref{SpaceL}).
 There exists $H\in L^p(\partial\Omega)$ with $\|H\|_{L^p(\partial\Omega)}\lesssim \|f\|_{\dot{L}^p_{1,1/2}(\partial\Omega)}$ for which the following holds:
 
Consider any parabolic ball $B\subset \R^n\times\R$ of radius $r$ centered at the boundary $\partial\Omega$, let $\Delta=B\cap\partial\Omega$ and  $\varphi$ be a $C_0^\infty(\mathbb R^n\times\R)$ cutoff function such that $\varphi=1$ on $2B$ and $\varphi=0$ outside $3B$ with $\|\partial_t\varphi\|_{L^\infty}\lesssim r^{-2}$ and $\|\nabla_x\varphi\|_{L^\infty}\lesssim r^{-1}$. 
 Then we have the following estimate:
$$\|(f-\textstyle\fint_{\Delta} f)\varphi\|_{\dot{L}^p_{1,1/2}(\partial\Omega)}\lesssim \|H\|_{L^p(32\Delta)}+\|D^{1/2}_t f\|_{L^p(32\Delta)}
\lesssim \|f\|_{\dot{L}^p_{1,1/2}(\partial\Omega)}.$$

Furthermore for all $(x,t)\in 8\Delta$ we have
\begin{equation}\label{245}
|D^{1/2}_t((f-\textstyle\fint_{\Delta} f)\varphi)(x,t)-D^{1/2}_tf(x,t)\varphi(x,t)|\lesssim
H(x,t).
\end{equation}
\end{proposition}\vglue5mm

\noindent We would like to apply \eqref{245} to $h$ not $f$ directly but there is little difference. The function $H$ in the statement of Proposition is constructed from the sharp maximal function of $f$ and thus with $M(H)$ in place of $H$ above the statement does hold for $h$.  Thus we can get for some $r$ with $|r|\lesssim M(H)$:
$$D^{1/2}_t((h-\textstyle\fint_{\Delta} h)\varphi)(y,s)= [(D_t^{1/2}h)\varphi](y,s)+r(y,s).$$
Noticing the average is that of $h$, but the averages $\fint_{\Delta} h$ and $\fint_{\Delta} f$ are only little different
and via the sharp maximal function of $f$ it can be shown that 
$$\left|\fint_{\Delta} h-\fint_{\Delta} f\right|\lesssim r\fint_{3\Delta}f^\sharp\lesssim rM(H)(P,\tau).$$
Therefore (as $D^{1/2}_t\varphi\approx r^{-1}$) we have that
$$|D^{1/2}_t((h-\textstyle\fint_{\Delta} f\displaystyle)\varphi)(y,s)|\lesssim \fint_{\pi(Q_{mr}(P,\tau))}[|D^{1/2}_tf(z,\tau')|+M(H)(z,\tau')]\,dz\,d\tau'
$$
$$+M(H)(P,\tau)\le M(|D^{1/2}_t f|)(P,\tau)+M(M(H))(P,\tau),$$
where as before we have used that $Q_r(y,s)\subset Q_{mr}(P,\tau)$. From this 
$$w_3(P,\tau)\lesssim M(|D^{1/2}_t f|)(P,\tau)+M(M(H))(P,\tau) + N(D^{1/2}_t((g-h)(1-\varphi))).$$

For the last term we have for any $(Y,s)\in Q(X,t)$ that
$$|D^{1/2}_t((u-f)(1-\varphi))|(Y,s)\lesssim \int_{|s-s'|\ge x_n^{2}}\frac{|u-f|(Y,\tilde{s})}{|s-\tilde{s}|^{3/2}}d\tilde{s}$$
$$= \sum_{i=0}^\infty 2^{-i/2}\fint_{|s-\tilde{s}|\approx 2^i x_n^{2}} x_n^{-1}|u-f|(Y,\tilde{s})d\tilde{s}.$$
Recall, that we understand here $f$ as $u\circ\pi$. By using the fundamental theorem of calculus we have
\begin{equation}\label{split33}
|u-f|(Y,\tilde{s})=\int_0^{y_n}|\partial_n u|(y,z_n,\tilde s)dz_n,
\end{equation}
and therefore
$$|D^{1/2}_t((u- f)(1-\varphi))|(Y,s)\lesssim \sum_{i=0}^\infty 2^{-i/2} \fiint_{[0,y_n]\times|s-s'|\approx 2^i x_n^{2}}
|\partial_n u|(y,z_n,s')dz_n\,ds'.$$
We now average this over $Q(X,s)$. As the average of $u$ is $g$ and average of $f$ over $\pi(Q(X,s))$
is $h$ we get that

$$|D^{1/2}_t((g-h)(1-\varphi))|(Y,s)\lesssim \sum_{i=0}^\infty 2^{-i/2} \fiint_{\{|x-y'|<x_n\}\times[0,y_n]\times|s-s'|\approx 2^i x_n^{2}}
|\partial_n u|(y',z_n,s')dy'\,dz_n\,ds'.$$
We now square and average  over $Q(X,s)$. We deal with the sum by split the term $2^{-i/2}$ into two parts to get decay in Cauchy-Schwarz inequality on both terms. That is with the integral being the $b_n$ term in the calculation
$$\left(\sum_n 2^{-i/2}b_n\right)^{2}=\left(\sum_n 2^{-i/4}2^{-i/4}b_n\right)^{2}
\le\textstyle(\sum_n 2^{-i/2})\displaystyle\sum_n 2^{-i/2}|b_n|^{2}.$$
Hence
$$\fiint_{Q(X,t)} |D^{1/2}_t((u- f)(1-\varphi))|^2dY\,ds\lesssim \sum_{i=0}^\infty 2^{-i/2}\left(\fiint_{\{|x-y|<2x_n\}\times[0,3x_n]\times |s-t|\approx 2^ix_n^2}|\partial_n u|\right)^{2}.$$
The integral over the region of integration can be estimated using the non-tangential maximal function of $\partial_n u$. Clearly we may use the $L^1$ version of it  and we get that
$$\fiint_{\{|x-y|<2x_n\}\times[0,3x_n]\times |s-t|\approx 2^ix_n^2}|\partial_n u|dY\,ds\lesssim
\fint_{\{|x-y|<2x_n\}\times  |s-t|\approx 2^ix_n^2} \tilde{N}_{1}(\partial_nu)\,dy\,ds.$$
Recall however, that we have $\tilde{N}_{1}\lesssim \tilde{N}_2$ by H\"older.
Hence the last term is further bounded by $\fint_{\{|x-y|<2x_n\}\times  |s-t|\approx 2^ix_n^2} \tilde{N}_{2}(\partial_nu)\,dy\,ds.$ Considering the maximal functions that can be shifted to the vertex $(P,\tau)$ we can then further estimate it by
$$C M_t(M(\tilde{N}(\nabla u)))(P,\tau),$$
and hence
\begin{equation}\label{splitw3}
w_3(P,\tau)\lesssim M(|D^{1/2}_t f|)(P,\tau)+M(M(H))(P,\tau) +M_t(M(\tilde{N}(\nabla u)))(P,\tau).
\end{equation}

where $(P,\tau)$ is as before the vertex of the non-tangential cone to which the point $(X,t)$ belongs to. \medskip

We have one more term to consider, namely $D^{1/2}_t((\textstyle\fiint_{Q(X,t)}u-\textstyle\fint_{\Delta} f)\varphi)$. Since $D^{1/2}_t(\varphi)\approx r^{-1}$, we just need a good bound on the difference of averages $\textstyle\fiint_{Q(X,t)}u-\textstyle\fint_{\Delta} f$. Adapting the same idea as above, namely \eqref{split33} we obtain aver averaging over $Q(X,t)$:
\begin{equation}\label{zmeko}
\left|\fiint_{Q(X,t)}u-\fint_{\Delta} f\right|\lesssim \int_0^{2r}\fiint_{Q(X,t)}|\partial_n u|\lesssim \int_0^{2r}\fiint_{Q_{mr}(P,\tau)}|\partial_n u|
\end{equation}
$$\le 2r M(N(\nabla u))(P,\tau).
$$
Hence 
$$w_4(X,t)\lesssim M(N(\nabla u))(P,\tau).$$

\medskip
We finish our argument by recalling that $w(X,t)\lesssim w_1(X,t)+w_2(X,t)+w_3(X,t)+w_4(X,t)$ and hence by just finished calculation,  \eqref{splitw1}, \eqref{splitw2}, \eqref{splitw3} and  the estimate above
  we get for $\tilde{N}(D^{1/2}_tu)(P,\tau)=\sup_{(X,t)\in\Gamma(P,\tau)}w(X,t)$:
 $$\tilde{N}(D^{1/2}_tu)(P,\tau)\lesssim M(\tilde{N}(\nabla u))(P,\tau)+M_t(M(\tilde{N}(\nabla u)))(P,\tau)$$
 $$\qquad\qquad\qquad+M(|D^{1/2}_t f|)(P,\tau)+M(M(H))(P,\tau).$$
 Thus by boundedness of Hardy-Littlewood maximal functions $M$ and $M_t$ when $p>1$ we therefore have for some $C>0$:
$$\|\tilde{N}(D^{1/2}_tu)\|_{L^p(\partial\Omega)}\le C[\|\tilde{N}(\nabla u)\|_{L^p(\partial\Omega)}
+\|D^{1/2}_tf\|_{L^p(\partial\Omega)}+\|H\|_{L^p(\partial\Omega)}]$$
$$\quad\lesssim \|\tilde{N}(\nabla u)\|_{L^p(\partial\Omega)}+\|f\|_{\dot{L}^p_{1,1/2}(\partial\Omega)},$$
as desired.
 
 Let us briefly address similar bounds for $\tilde{N}(D^{1/2}_tH_tu)$. Because of $\eqref{splitss}$ and the bounds we have just established above we have that
 $$\left(\fiint_{Q(X,t)}|D^{1/2}_tH_t(v\varphi)|^2\right)^{1/2}\lesssim M(\tilde{N}(\nabla u))(P,\tau).$$
Since
\begin{equation}\label{split1H}
D^{1/2}_tH_tu- D^{1/2}_tH_t(v\varphi) =D^{1/2}_tH_t((u- g)(1-\varphi))+D^{1/2}_tH_t((g-\textstyle\fint_{\Delta}f)(1-\varphi))
\end{equation}
\begin{equation}\qquad\qquad\nonumber
+D^{1/2}_tH_t((\textstyle\fiint_{Q(X,t)}u-\textstyle\fint_{\Delta} f)\varphi).
\end{equation}

we just need to re-analyse the remaining terms for the new operator $D^{1/2}_tH_t$. However, 
$D^{1/2}_tH_t$ and $D^{1/2}_t$ are similar operators, we have
$$D^{1/2}_tH_tf(s)=c\int_{\mathbb R}\frac{f(s)-f(\tau)}{(s-\tau)|s-\tau|^{1/2}}d\tau.$$ 
 It follows that the argument for the first and second terms on the right-hand side of \eqref{split1H} will be identical to 
 the one given for $D_t^{1/2}$ and for the last term we again use the calculation given above without any change.   Hence
 $$\|\tilde{N}(D^{1/2}_tH_tu)\|_{L^p(\partial\Omega)}\le C\|\tilde{N}(\nabla u)\|_{L^p(\partial\Omega)}+\|f\|_{\dot{L}^p_{1,1/2}(\partial\Omega)},$$
as well.\vglue2mm

Now we discuss the key differences between the case $\mathcal O=\mathbb R^n_+$ and the case 
$\mathcal O$ is an arbitrary uniform domain with $n-1$-Ahlfors regular boundary. Some steps are nearly identical, for example after replacing $x_n$ by $h=\delta(X,t)=\delta(X)$ we see that
the estimate for \eqref{splitw1} holds as this is an interior estimate. Same is true for \eqref{splitw2}.
Observe also that Proposition \ref{prop:Product_Rule} does not require any modification (we used it for one of the $w_3$ terms).

Steps that do need a serious rethink are those where we have integrated along the straight line from an interior point to the boundary (c.f.  \eqref{split33}).  These estimates rely on the projection $\pi:\Omega\to \partial\Omega$ which only really make sense on graph-like domains. \vglue1mm

Here we modify the idea of the paper \cite{DPext}. The whole integration here takes place on a single time slice of the domain $\Omega$ and hence let us drop for now the variable $t$ completely.

Consider a function $u:\mathcal O\to\mathbb R$. Let us define space-only averages of $u$, that is
$$u_{av}(X)=\fiint_{|X-Z|<\delta(Y)/8} u(Z)\,dZ.$$

For a fixed point $X\in \mathcal O$ and let $h=\delta(X)$. Using the fundamental theorem of calculus we clearly see that this average is related to the average defined in \eqref{uav}
and we will have for $u$ on the time slice $t_0$ (slightly abusing the notation):
\begin{equation}\label{zzb}
|u_{av}(X)-u_{av}(t_0)|\lesssim  h\fiint_{B(X,\delta(X)/4)}|\nabla u|.
\end{equation}

Recall that  $\tilde{S}(X)=\{q\in\partial\Omega: X\in\tilde\gamma(q)\}$. By Proposition 3.3 of \cite{DPext} we have $\sigma(\tilde{S}(X))\approx h^{n-1}$, where $\sigma=\mathcal H^{n-1}\Big|_{\partial\mathcal O}$ is the natural $n-1$-dimensional Hausdorff measure on $\partial\mathcal O$.

For any $q\in \tilde{S}(Y)$ we estimate the difference $|u_{av}(X)-f(q)|$. 
The argument uses both key properties of our domain $\mathcal O$, namely the existence of corkscrew points and interior Harnack chain condition.  We claim that

\begin{equation}\label{Conebound}
|u_{av}(X)-f(q)|  \lesssim A_{\tilde{a}}(\nabla u)(q)
\end{equation}
where 
$$A_{\tilde{a}}(\nabla u)(q) =  \iint_{\widetilde\gamma_{\tilde{a}}^{2h}(q)} |\nabla v|(Z) \delta(Z)^{1-n} dZ.$$
Here, the parameter $\tilde{a}$ (the aperture of the corkscrew region $\widetilde\gamma(q)$) will be determined later and the superscript $2h$ mean that we will truncate the corkscrew region at the height $2h$, that is
$\widetilde\gamma^{2d}(q):= \widetilde\gamma(q) \cap B(q,2h)$.

\medskip

Proof of \eqref{Conebound}:  Since $X \in \widetilde\gamma_a(q)$, it follows that $X \in \gamma_{2a}(q)$ and so there is a sequence of corkscrew points $X_j$ associated to
 the point $q$ at scales 
$r_j \approx 2^{-j}h$, $j=0,1,2,\dots$ with $X_0=X$. By the Harnack chain condition, for each $j$ there is a number $N$ and a constant $C>0$ such that
there exists  $n(j) \leq N$ balls $B^{(j)}_k$ of radius $\approx 2^{-j}h$ with $C B^{(j)}_k \subset \mathcal O$, $X_{j-1} \in B^{(j)}_1$, 
$X_j \in B^{(j)}_n$, and $B^{(j)}_k \cap  B^{(j)}_{k+1} \ne \emptyset$. 
Therefore we can find another chain of balls with the same properties for a larger but fixed choice of $N$ so that
$4 B^{(j)}_k \subset \mathcal O$ 
and $B^{(j)}_{k+1} \subset 2B^{(j)}_{k}$.

Considering the whole collection of balls $B_{k}^{(j)}$ for all $j=0,1,2,\dots$ and $k=1,\dots, n(j)\le N$ it follows that we have an infinite chain of balls, the first of which contains $X=X_0$, converging to the boundary point $q$, with the property that any pair of consecutive balls in
the chain have roughly the same radius and their $4$-fold enlargements are contained in $\mathcal O$. We relabel these
balls $B_j(X_j,r_j)$ with centers $X_j$ and radii $r_j \approx t^{-j}h$ for some $t < 1$ depending on $N$. 

We next claim that, for any $j=0,1,2,\dots$, 
\begin{equation}\label{Averages}
\left|\fiint_{B_j} u(Z) dZ - \fiint_{B_{j+1}} u(Z) dz\right| \lesssim t^{-j}\fiint_{2B_j}|\nabla u|(Z)dZ\approx \iint_{2B_j}|\nabla u|(Z)\delta(Z)^{1-n}dZ.
\end{equation}

To prove \eqref{Averages}, define the map $T(Y) =  r_j/r_{j+1}(Y-X_j) + X_{j+1}$ from $B_j$ to $B_{j+1}$. Then
\begin{equation}
|u(T(Y)) - u(Y)| \leq \int_{\ell \in [Y, T(Y)]} |\nabla u(\ell)|\,\, d\ell
\end{equation}
where $ [Y, T(Y)]$ is the line segment from $Y$ to $T(Y)$.
Averaging $Y$ over $B_j$, using the triangle inequality, and observing that the collection of lines $[Y,T(Y)]$ is contained  in $2B_j$, gives us what we want.

The claim \eqref{Conebound} results from summing the averages in \eqref{Averages} as follows.

Set $$U_j := \fiint_{B_j} u(Z)dZ - \fiint_{B_{j+1}} u(Z)dZ$$
Because $u$ attains value $f(q)$ on the boundary, the averages $\fiint_{B_j} u(Z)dZ $ are converging to $f(q)$
for $\sigma$-a.e. $q\in\partial\Omega$. It follows that
$$|u_{av}(X)-f(q)|\le \sum_{j=0}^\infty |U_j|\lesssim \sum_{j=0}^\infty \iint_{2B_j}|\nabla u|(Z)\delta(Z)^{1-n}dZ.$$
We have that each $X_j\in \gamma_{1+2a}(q)$ and hence for some $\tilde{a}>>2a$ (independent of $q$) we will have that the enlarged ball $2B_j\subset \gamma_{\tilde a}(q)$. Hence \eqref{Conebound} follows.

Since \eqref{Conebound} holds for all $q\in \tilde{S}(X)$ we integrate the inequality over this set. This gives us
$$
\left|\int_{\tilde{S}(X)} [u_{av}(X)-f(q)]d\sigma(q)\right|\lesssim \int_{\tilde{S}(X)}A_{\tilde{a}}d\sigma
=\int_{\tilde{S}(X)}\iint_{\widetilde\gamma_{\tilde{a}}^{2h}(q)} |\nabla v|(Z) \delta(Z)^{1-n} dZ\,d\sigma(q).
$$
Let $T(X)=\bigcup_{q\in \tilde{S}(X)} \gamma_{\tilde{a}}^{2h}(q)$. Since for any $Z\in T(X)$ we have that
$\sigma((\tilde{S}(Z))\lesssim \delta(Z)^{n-1}$

By exchanging the order in the last integration get get that
$$\left|\int_{\tilde{S}(X)} [u_{av}(X)-f(q)]d\sigma(q)\right|\lesssim \iint_{T(X)} |\nabla v|(Z)  dZ.$$
It follows that
\begin{equation}\label{zza}
\left| u_{av}(X)-f_{av}\right|\lesssim h^{1-n}\iint_{T(X)} |\nabla v|(Z)  dZ,
\end{equation}
where  $f_{av}=\sigma(\tilde{S}(X))^{-1}\int_{\tilde{S}(X)} f\,d\sigma$. 
We now use this on the time-slices $Q(X,t)\cap\{\tau=const\}$ for $\tau\in (t-h^2/16,t+h^2/16)$, integrate in $\tau$ and average. Let $\Delta$ be the set $\tilde{S}(X)\times (t-h^2/16,t+h^2/16)$ any $\mu$ the product measure $d\mu=d\sigma\, dt$. It follows that
$$\left|\fiint_{Q(X,t)} u-\mu(\Delta)^{-1}\int_{\Delta} f \right|\lesssim \fint_{t-h^2/16}^{t+h^2/16}[|u_{av}(X,\tau)-f_{av}(\tau)|+|u_{av}(\tau)- u_{av}(X,\tau) |]\,d\tau
$$
and after using \eqref{zza} for the first term and \eqref{zzb} for the second term we get that
$$\left|\fiint_{Q(X,t)} u-\mu(\Delta)^{-1}\int_{\Delta} f \right|\lesssim  h^{-n-1}\iint_{T(X)\times (t-h^2/16,t+h^2/16)} |\nabla v|  d Z\,d \tau.$$
This is a replacement of the estimate needed for \eqref{zmeko}. 

An analogous calculation also applies to the term containing $D^{1/2}_t((g-h)(1-\varphi))$. Thus
\eqref{zza} is the key to modifying  the proof to this more general geometric setting.
Hence the claim follows.
\end{proof}


\bibliographystyle{alpha}

\end{document}